\numberwithin{equation}{section}
\newtheorem{teo}{Theorem}[section]
\newtheorem{theorem}{Theorem}[section]
\newtheorem{lemma}{Lemma}[section]
\newcommand{\ve}{\varepsilon}
\theoremstyle{definition}
\theoremstyle{remark}
\newtheorem{remark}[teo]{Remark}
\begin{document}
\bibliographystyle{amsplain}

\title[Convexity of level sets]{Convexity of level sets \\ and a two-point function}
\author[B. Weinkove]{Ben Weinkove}
\address{Department of Mathematics, Northwestern University, 2033 Sheridan Road, Evanston, IL 60208}

\thanks{MSC 35J05, 31B05.  Keywords: convexity, level sets, harmonic functions, principal curvature.  Supported in part by National Science Foundation grant DMS-1406164.}
\maketitle

\vspace{-20pt}

\begin{abstract}
We establish a maximum principle for a two-point function in order to analyze the  convexity of level sets of harmonic functions.  We show that this can be used to prove a strict convexity result involving the smallest principal curvature of the level sets.
\end{abstract}

\section{Introduction} \label{intro}
 
The study of the convexity of level sets of solutions to elliptic PDEs has a long history, starting with the well-known result that the level curves of the Green's function of a convex domain $\Omega$ in $\mathbb{R}^2$ are convex \cite{A}.  In the 1950s Gabriel \cite{G} proved the analogous result in 3 dimensions and this was extended by Lewis \cite{L} and later Caffarelli-Spruck \cite{CS} to higher dimensions and  more general elliptic PDEs.  These results show that for a large class of PDEs, there is a principle that convexity properties of the boundary of the domain $\Omega$ imply convexity of the level sets of the solution $u$.

There are several approaches to these kinds of convexity results (see for example \cite[Section III.11]{Ka0}).  One is the ``macroscopic'' approach which uses a globally defined function of two points $x, y$ (which could be far apart) such as $u( \frac{1}{2}(x+y)) - \min(u(x), u(y))$.  Another is the ``microscopic'' approach which computes with functions of 
 the principal curvatures of the level sets at a single point.  This is often used together with a constant rank theorem.  There is now a vast literature on these and closely related results, see for example \cite{ALL, BG, BLS, Bo, BL, CF, CGM, DK, HNS, Ko1, Ko2, KL, RR, S, SWYY, SW, W}  and the references therein.

It is natural to ask whether these ideas can be extended to cases where the boundary of the domain is \emph{not} convex.  Are the level sets of the solution at least as convex as the boundary in some appropriate sense?  
In this short note we introduce a global ``macroscopic'' function of two points which gives a kind of measure of convexity and makes sense for non-convex domains.  Our function
\begin{equation}
(Du(y) - Du(x)) \cdot (y-x)
\end{equation}
 is evaluated at two points $x$, $y$ which are constrained to lie on the same level set of $u$. Under suitable conditions, a level set of $u$ is convex if and only if this quantity has the correct sign on that level set.
  We prove a maximum principle for this function using the method of Rosay-Rudin \cite{RR} who considered a different two-point function
\begin{equation} \label{RRf}
\frac{1}{2} (u(x)+u(y)) - u\left( \frac{x+y}{2}\right).
\end{equation}
In addition, we show that our ``macroscopic'' approach can be used to prove a ``microscopic'' result.  Namely, we localize our function and show that it gives another proof of a result of Chang-Ma-Yang \cite{CMY} on the principal curvatures of the level sets of a harmonic function $u$.
In this paper, we consider only the case of harmonic functions.  However, we expect that our techniques  extend to some more general types of PDEs.

We now describe our results more precisely.  Let $\Omega_0$ and  $\Omega_1$ be  bounded domains in $ \mathbb{R}^n$ with $\overline{\Omega}_1 \subset \Omega_0$.  
Define $\Omega= \Omega_0 \setminus \Omega_1$. 
Assume that $u \in C^1(\overline{\Omega})$ satisfies
\begin{equation}
\begin{split}
\Delta u = {} & 0 \quad \textrm{in } \Omega = \Omega_0 \setminus \overline{\Omega}_1 \\
u = {} & 0 \quad \textrm{on }\partial \Omega_0 \\
u ={} & 1 \quad \textrm{on } \partial \Omega_1,
\end{split}
\end{equation}
and 
\begin{equation} \label{grad}
Du \textrm{ is nowhere vanishing in $\Omega$}.
\end{equation}
It is well known that (\ref{grad}) is satisfied if $\Omega_0$ and $\Omega_1$ are both starshaped with respect to some point $p\in \Omega_1$.  A special case of interest is when both $\Omega_0$ and $\Omega_1$ are convex, but this is not required for our main result.

To introduce our two-point function, first fix  a smooth function $\psi : [0, \infty) \rightarrow \mathbb{R}$  satisfying
\begin{equation} \label{psiass}
\psi'(t) - 2|\psi''(t)| t \ge 0.
\end{equation}
For example, we could take $\psi(t) = at$ for $a \ge 0$.  
Then define
\begin{equation} \label{Qd}
Q(x,y) = (Du(y) - Du(x)) \cdot (y-x) + \psi( |y-x|^2)
\end{equation}
restricted to $(x,y)$ in 
$$\Sigma = \{ (x,y) \in \overline{\Omega} \times \overline{\Omega} \ | \ u(x)=u(y) \}.$$
Comparing with the Rosay-Rudin function (\ref{RRf}), note that the function $Q(x,y)$ does not require $(x+y)/2 \in \overline{\Omega}$ and makes sense
 whether or not $\partial \Omega_0$ or $\partial \Omega_1$ are convex.  Taking $\psi=0$, the level set $\{ u=c\}$ is convex if and only if the quantity $Q$  is nonpositive on $\{ u=c\}$.  If $\psi(t) = at$ for $a>0$ then $Q \le 0$ implies strict convexity of the level set.
More generally $Q$ gives quantitative information about the convexity of the level sets $\{ u=c \}$, relative to the gradient $Du$.

We also remark that the function (\ref{Qd}) looks formally similar to the two-point function of Andrews-Clutterbuck, a crucial tool in their proof of the fundamental gap conjecture \cite{AC}.  However, here $x$ and $y$ are constrained to lie on the same level set of $u$ and so the methods of this paper are quite different.

Our main result is:

\begin{theorem} \label{theoremmain} $Q$ does not attain a strict maximum at a point in the interior of $\Sigma$.
\end{theorem}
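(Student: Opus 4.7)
The plan is to argue by contradiction. Suppose $Q$ attains a strict local maximum at an interior point $(x_0,y_0)\in\Sigma$. The diagonal case is immediate: $Q(x,x)=\psi(0)$ is constant along $\{(x,x):x\in\Omega\}\subset\Sigma$, which rules out $x_0=y_0$.

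Assuming $x_0\neq y_0$, because $Du$ is nowhere vanishing the constraint $u(x)=u(y)$ is regular near $(x_0,y_0)$, and Lagrange multipliers produce a scalar $\lambda$ with $D_xQ=\lambda Du(x_0)$ and $D_yQ=-\lambda Du(y_0)$. The second-order necessary condition is that $D^2L$ is negative semidefinite on the $(2n-1)$-dimensional tangent space $T=\{(\xi,\eta):Du(x_0)\cdot\xi=Du(y_0)\cdot\eta\}$, where $L=Q-\lambda(u(x)-u(y))$. A direct computation of the Hessian of $\psi(|y-x|^2)$ along a direction $(\xi,\eta)$ gives
\[
2\psi'(|y-x|^2)|\xi-\eta|^2+4\psi''(|y-x|^2)\bigl((\xi-\eta)\cdot(y-x)\bigr)^2,
\]
nonnegative by Cauchy--Schwarz together with the hypothesis $\psi'\geq 2|\psi''|t$. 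So the $\psi$-piece of $D^2Q$ already has the correct sign along every tangent direction.

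The heart of the proof is to produce a contradiction from the second-order condition on the remaining $u$-part of $Q$. Writing $R=|Du(x_0)|$, $S=|Du(y_0)|$, $\nu_x=Du(x_0)/R$, $\nu_y=Du(y_0)/S$, and taking orthonormal bases $\{v_k\}$ of $\mathrm{ker}\,Du(x_0)$ and $\{w_k\}$ of $\mathrm{ker}\,Du(y_0)$ together with the extra tangent vector $p=(S\nu_x,R\nu_y)$, I would sum the inequalities $D^2L((v_k,0),(v_k,0))\leq 0$, $D^2L((0,w_k),(0,w_k))\leq 0$, and $D^2L(p,p)\leq 0$ with the positive weights $S^2$, $R^2$, and $1$ respectively. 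These weights are forced by two cancellations: every third derivative of $u$ in the resulting trace collapses into $(y-x)\cdot\nabla(\Delta u)(x_0)$ or $(y-x)\cdot\nabla(\Delta u)(y_0)$, which vanishes by harmonicity, and the Lagrange multiplier $\lambda$ also drops out. What remains is the nonnegative $\psi$-contribution plus the mixed second-derivative quantity $2RS\,\partial^2_{xy}Q(\nu_x,\nu_y)$, and this residual term is to be controlled using the identities extracted from the first-order conditions $D_xQ=\lambda Du(x_0)$ and $D_yQ=-\lambda Du(y_0)$, yielding a strictly positive sum and thereby contradicting $D^2L|_T\leq 0$.

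The key obstacle is the combinatorics of this weighted trace. The tangency condition couples the two sides of $\Sigma$ asymmetrically whenever $R\neq S$ or $\nu_x\neq\nu_y$, which is what pins down both the weights and the extra direction $p$ essentially uniquely. Verifying that the leftover mixed Hessian term $\partial^2_{xy}Q(\nu_x,\nu_y)$ can genuinely be absorbed using the $\psi$-hypothesis and the first-order Lagrange identities is the delicate part of the argument; the rest (the elimination of third-derivative terms through harmonicity of $u$ and the cancellation of $\lambda$) is essentially forced by the geometry of $\Sigma$.
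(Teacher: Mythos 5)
Your setup is sound up to the point you yourself flag as delicate, and that is precisely where the argument breaks down. With your frame $\{(v_k,0)\}\cup\{(0,w_k)\}\cup\{p\}$ and weights $S^2, R^2, 1$, the weighted trace of $D^2L$ can be computed explicitly: writing $A=D^2u(x_0)$, $B=D^2u(y_0)$, the pure-$x$ block contributes $S^2\,\mathrm{tr}\bigl(-D^3u(x_0)(y_0-x_0)+2A\bigr)=0$ and the pure-$y$ block contributes $R^2\,\mathrm{tr}\bigl(D^3u(y_0)(y_0-x_0)+2B\bigr)=0$ (both by harmonicity, since $A$ and $B$ are traceless and the third-derivative contractions are derivatives of $\Delta u$), the multiplier $\lambda$ indeed drops out, and the $\psi$-part is nonnegative as you show. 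But since $\partial_{y}\partial_{x}Q=-(A+B)$, the only surviving term is $-2RS\,(A+B)(\nu_x,\nu_y)$, and this quantity has no sign. It cannot be absorbed by the $\psi$-hypothesis, which only yields nonnegativity of the $\psi$-block with no quantitative margin, and it cannot be extracted from the first-order conditions, which constrain $A(y_0-x_0)$ and $B(y_0-x_0)$ but say nothing about $A\nu_y$ or $B\nu_x$. So the weighted sum is not provably positive and no contradiction results.

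The missing idea is the choice of pairing between $x$-directions and $y$-directions. Your frame concentrates all the mixing into the single direction $p$, which forces the Hessians to be contracted against the rank-one matrix $\nu_x\otimes\nu_y$. The paper (following Rosay--Rudin) instead uses tangent directions of the form $(\zeta,\,cL\zeta+(\cdots)\nu_y)$ for $\zeta$ ranging over an orthonormal basis, where $L\in\mathrm{O}(n)$ satisfies $L\nu_x=\nu_y$ \emph{and} is a direct sum of $2\times 2$ rotation blocks, so that its symmetric part is $\cos\theta\cdot I$. Summing over such a frame, every cross term contracts $A$ and $B$ against $\mathrm{sym}(L)\propto I$ and hence collapses to a multiple of $\Delta u=0$. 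This is not a cosmetic difference: such an $L$ exists only in even dimensions (whence the separate stabilization argument for $n$ odd in the paper), which shows the cancellation genuinely depends on the algebraic structure of $L$ and will not come for free from Lagrange identities. Your Lagrange-multiplier framework could be repaired by replacing $p$ and the pure directions with this rotated pairing, but as written the proof has a real gap.
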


Roughly speaking, this result says that the level sets $\{ u=c\}$ for $0 \le c \le 1$ are ``the least convex'' when $c=0$ or $c=1$.
As mentioned above, the result holds even in the case that  $\partial \Omega_0$ and $\partial \Omega_1$ are non-convex. 

The proof of Theorem \ref{theoremmain} follows quite closely the paper of Rosay-Rudin \cite{RR}.  Indeed a key tool of \cite{RR} is Lemma \ref{lemmaRR} below which gives a map from points $x$ to points $y$ with the property that $x,y$ lie on the same level set.

Next we localize our function (\ref{Qd}) to prove a strict convexity result on the level sets of $u$.   If we assume  now that $\partial \Omega_0$ and $\partial \Omega_1$ are strictly convex, we can  apply the technique of Theorem \ref{theoremmain} to obtain an alternative proof of the following result of  Chang-Ma-Yang \cite{CMY}.

\begin{theorem} \label{theoremCMY}
Assume in addition that $\partial \Omega_0$ and $\partial \Omega_1$ are strictly convex and  $C^2$. Then the quantity $|Du| \kappa_1$ attains its minimum on the boundary of $\Omega$, where $\kappa_1$ is the smallest principal curvature of the level sets of $u$.
 \end{theorem}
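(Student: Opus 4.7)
The strategy is to apply the method of Theorem \ref{theoremmain} to an infinitesimal version of $Q$. The key observation is that as $y$ approaches $x$ along the level set $\{u = u(x)\}$ with $(y-x)/|y-x|$ tending to a unit tangent vector $v \perp Du(x)$, a direct Taylor expansion gives
\begin{equation*}
\lim_{y \to x} \frac{Q(x,y)}{|y-x|^2} \; = \; \langle D^2 u(x)\, v, v\rangle + \psi'(0).
\end{equation*}
A standard computation shows that the principal curvatures of $\{u=u(x)\}$ with respect to the outer normal $-Du/|Du|$ of $\{u \ge u(x)\}$ are the eigenvalues of $-D^2 u/|Du|$ restricted to $(Du)^\perp$. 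In particular $\max_v \langle D^2 u(x)\, v, v\rangle = -|Du(x)|\,\kappa_1(x)$ where the maximum runs over unit tangent $v \perp Du(x)$, so the supremum over $v$ of the limiting quantity above is exactly $-|Du|\kappa_1$ up to the constant $\psi'(0)$.

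Suppose for contradiction that $|Du|\kappa_1$ does not attain its minimum on $\partial\Omega$. By subtracting a small positive multiple of $|Du|$ or of $u(1-u)$, one may arrange that $|Du|\kappa_1$ attains a strict interior minimum at some point $x_0 \in \Omega$; equivalently, $-|Du|\kappa_1$ has a strict interior maximum at $x_0$, attained in some tangent direction $v_0$. Take $\psi(t) = at$ with $a>0$ small. The plan is now to run the proof of Theorem \ref{theoremmain} in the infinitesimal regime near the diagonal point $(x_0, x_0) \in \Sigma$: the Rosay-Rudin-type construction from Lemma \ref{lemmaRR} provides, for each small $s>0$, admissible pairs $(x_s, y_s) \in \Sigma$ with $x_s \to x_0$, $y_s \to x_0$, and $(y_s - x_s)/|y_s-x_s| \to v_0$. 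The elliptic differential inequality obtained at an interior maximum of $Q$ in the proof of Theorem \ref{theoremmain}, once divided through by $|y_s - x_s|^2$ and passed to the limit $s \to 0$, becomes an elliptic inequality for $-|Du|\kappa_1$ at $x_0$ in the direction $v_0$, which contradicts the strict interior maximum.

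The role of the strict convexity hypothesis on $\partial\Omega_0$ and $\partial\Omega_1$ is to ensure that $\kappa_1 > 0$ in a neighborhood of $\partial\Omega$, so that $|Du|\kappa_1$ is uniformly positive near the boundary and the perturbation used to force strictness at $x_0$ can be taken small enough to preserve the gap between the interior minimum and the infimum on $\partial\Omega$. The main obstacle in executing this plan is handling the direction variable $v$: the argument requires simultaneous variation in $x$ and in the minimizing tangent direction, so the infinitesimal Rosay-Rudin computation has to be carried out on the bundle of unit tangent directions to the level sets near $x_0$, with the minimizing direction $v_0$ extended to a smooth unit tangent vector field $v(x)$ and the second-derivative calculation done in this frame. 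Verifying that the resulting operator is degenerate elliptic in the direction $v_0$, and that the second-order terms match those produced by the Rosay-Rudin maps, is the technical heart of the argument and is essentially where the original calculation of Chang-Ma-Yang reappears.
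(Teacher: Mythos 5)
Your opening step --- the correspondence between the sign of $(Du(y)-Du(x))\cdot(y-x)+a|y-x|^2$ for nearby $x,y$ on a level set and the inequality $\kappa_1|Du|\ge a$ --- is exactly the claim the paper establishes at the start of its argument, so that part is sound. The gap is in the main step. You propose to produce pairs $(x_s,y_s)\to(x_0,x_0)$ with direction tending to $v_0$ and then ``pass the elliptic differential inequality obtained at an interior maximum of $Q$'' to the limit. But the inequality $\Delta_w F(0)>0$ in the proof of Theorem \ref{theoremmain} is only available at a point where $Q$ actually attains an interior maximum; your pairs $(x_s,y_s)$ carry no maximality property, so there is no inequality to divide by $|y_s-x_s|^2$ and pass to the limit. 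Moreover, with $\psi(t)=at$ you have not shown that $Q$ attains a maximum at an interior point of $\Sigma$ with $y\ne x$ and $|y-x|$ small: near a point where $|Du|\kappa_1<a$ one has $Q\approx(a-|Du|\kappa_1)|y-x|^2>0$ growing in $|y-x|$, while $Q\equiv\psi(0)$ on the diagonal, so nothing traps the maximum in the infinitesimal regime. You concede that the remaining work is ``essentially where the original calculation of Chang--Ma--Yang reappears'' --- but that microscopic computation is precisely what a complete proof must either carry out or circumvent, and the proposal does neither.

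The paper circumvents it entirely, with no limiting argument and no frame of tangent directions. It takes $\psi(t)=at-\frac{a}{6\ve^2}t^2$, which still satisfies $\psi'(t)-2|\psi''(t)|t\ge0$ for $0\le t\le\ve^2$, and restricts $Q$ to $\Sigma^{\ve}=\{(x,y):u(x)=u(y),\ |y-x|\le\ve\}$. A comparison of orders of magnitude then locates the maximum: on the two boundary pieces of $\Sigma^{\ve}$ one gets $Q\le O(\ve^3)$ (when $x,y\in\partial\Omega$) or $Q<0$ (when $|y-x|=\ve$, using $\eta<a/6$), whereas near $(x_0,x_0)$, choosing $y$ at distance $\sim\ve^{4/3}$ from $x_0$ in the direction of smallest curvature gives $Q\sim\eta\,\ve^{8/3}\gg\ve^3$ since $\eta\gg\ve^{1/3}$. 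Hence the maximum is attained at an interior point with $0<|y-x|<\ve$, and Theorem \ref{theoremmain} applies verbatim to give the contradiction. The missing ingredients in your proposal are exactly the quartic correction to $\psi$, the constraint $|y-x|\le\ve$, and this quantitative three-way comparison; your ad hoc perturbation by a multiple of $|Du|$ or $u(1-u)$ is also unnecessary (and unjustified, since it changes the quantity being estimated), where the paper instead passes to a thinner convex ring $\{c-\rho<u<c+\rho\}$ to arrange $\eta<a/6$.
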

 
Note that many other strict convexity results of this kind are proved in \cite{CMY, JMO, Lo, MOZ, MYY, OS, ZZ} and other papers using 
microscopic techniques.   

 The author thanks G. Sz\'ekelyhidi for some helpful discussions and the referee for useful comments.

\section{Proof of Theorem \ref{theoremmain}} \label{sectionharmonic}

First we assume that $n$ is even.   We suppose for a contradiction that $Q$ attains a maximum at an interior point, and assume that $\sup_{\Sigma} Q > \sup_{\partial \Sigma} Q$.  Then we may choose $\delta>0$ sufficiently small so that 
$$Q_{\delta} (x,y) = Q(x,y) + \delta |x|^2$$
still attains a maximum at an interior point.

We use a lemma from \cite{RR}.  Suppose $(x_0, y_0)$ is  an interior point with $u(x_0)=u(y_0)$.  We may assume that $D u(x_0)$ and $Du(y_0)$ are nonzero vectors.  Let $L$ be an element of $\textrm{O}(n)$ with the property  that 
\begin{equation} \label{Lc}
L(Du(x_0)) =  cDu(y_0),  \quad\textrm{for } c = | Du(x_0)|/ |Du(y_0)|.
\end{equation}
Note that there is some freedom in the definition of $L$.  We will make a specific choice later. 
Rosay-Rudin  show the following (it is a special case of \cite[Lemma 1.3]{RR}).

\begin{lemma} \label{lemmaRR}
There exists a real analytic function $\alpha(w) = O(|w|^3)$ so that for all $w \in \mathbb{R}^n$ sufficiently close to the origin,
\begin{equation} \label{RR}
u(x_0 + w) = u\big(y_0 + cLw + f(w) \xi + \alpha(w) \xi\big), \quad \textrm{where } \xi = \frac{Du(y_0)}{|Du(y_0)|},
\end{equation}
where $f$ is a harmonic function defined in a neighborhood of the origin in $\mathbb{R}^n$, given by
\begin{equation} \label{eqnf}
f(w) = \frac{1}{|Du(y_0)|} (u(x_0+w) - u(y_0+cLw)).
\end{equation}
\end{lemma}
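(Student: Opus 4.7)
The plan is to define an implicit equation whose solution $s = \alpha(w)$ gives the required identity, solve it via the analytic implicit function theorem, and then verify the order-of-vanishing statement by matching Taylor coefficients at $w=0$.

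First I would set $v(z) := u(y_0+z)$ and define
$$F(w,s) := v\bigl(cLw + f(w)\xi + s\xi\bigr) - u(x_0+w).$$
Since $u(x_0) = u(y_0)$ and $f(0)=0$, we have $F(0,0)=0$. Moreover $\partial_s F(0,0) = Dv(0)\cdot \xi = |Du(y_0)| \ne 0$. Because $u$ is harmonic, hence real analytic, on the open set $\Omega$, $F$ is real analytic in a neighborhood of $(0,0)$, so the real-analytic implicit function theorem produces a unique real analytic $\alpha$ near $0$ with $\alpha(0)=0$ and $F(w,\alpha(w))\equiv 0$, which is the required identity. That $f$ itself is harmonic in $w$ follows immediately from the fact that $u$ is harmonic and that the affine map $w\mapsto y_0+cLw$ is a composition of an orthogonal transformation with a dilation; such maps preserve harmonicity (the Laplacian only gets rescaled by the constant factor $c^2$), so $w\mapsto u(y_0+cLw)$ is harmonic, and $f$ is the difference of two harmonic functions of $w$ divided by a constant.

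The substantive step is checking that the first and second order Taylor coefficients of $\alpha$ vanish. Writing $\alpha(w) = \alpha_1(w)+\alpha_2(w)+\cdots$ and $f(w)=f_1(w)+f_2(w)+\cdots$ with $\alpha_k, f_k$ homogeneous of degree $k$, the key cancellation comes from the choice of $L$: the condition $L(Du(x_0)) = cDu(y_0)$ gives $Du(x_0)\cdot w = cDu(y_0)\cdot Lw$, which forces $f_1 \equiv 0$. Matching the linear parts of $F(w,\alpha(w))=0$ then yields $|Du(y_0)|\,\alpha_1(w) = 0$, so $\alpha_1\equiv 0$. At second order, the quadratic contribution from $D^2v(0)$ applied to $cLw$ produces $\tfrac{c^2}{2}D^2u(y_0)(Lw,Lw)$, which is exactly cancelled by the second-order piece of $f$, since by definition $|Du(y_0)|f_2(w) = \tfrac{1}{2}D^2u(x_0)(w,w) - \tfrac{c^2}{2}D^2u(y_0)(Lw,Lw)$. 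After cancellation, matching quadratic coefficients in $F(w,\alpha(w))=0$ leaves $|Du(y_0)|\,\alpha_2(w)=0$, hence $\alpha_2\equiv 0$ and $\alpha(w)=O(|w|^3)$.

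The main obstacle is the second-order bookkeeping: one must simultaneously keep track of (i) the linear contribution to $v$ coming from $f(w)\xi$ and $\alpha(w)\xi$, and (ii) the quadratic contribution to $v$ coming from $D^2 v(0)$ applied to $cLw$. The mild surprise (and the reason the lemma is true as stated) is that $f$ was chosen precisely to absorb the quadratic part of $v$ along the direction $\xi$, so these two contributions cancel without any condition on $D^2u(x_0)$ or $D^2u(y_0)$ beyond the matching already imposed by $c$ and $L$.
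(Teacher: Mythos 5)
Your proposal is correct and follows essentially the same route as the paper: both define the same implicit equation, apply the real-analytic implicit function theorem using $\partial_s F(0,0)=|Du(y_0)|\neq 0$, and then show the first- and second-order terms of $\alpha$ vanish (the paper by differentiating $G(w,\alpha(w))=0$ twice at $w=0$, you by matching homogeneous Taylor components, which is the same computation). Your explicit observations that $f_1\equiv 0$ and that $f_2$ was built to cancel the quadratic contribution of $v$ along $\xi$ are exactly the cancellations underlying the paper's displayed identities.
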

\begin{proof}[Proof of Lemma \ref{lemmaRR}]  We include the brief argument here for the sake of completeness.  Define a real analytic map $G$ which takes $(w,\alpha) \in \mathbb{R}^n \times \mathbb{R}$ sufficiently close to the origin to 
$$G(w ,\alpha) = u\big(y_0 + cLw + f(w) \xi + \alpha \xi\big) - u(x_0+w),$$
for $c, L, \xi$  and $f$ defined by (\ref{Lc}), (\ref{RR}) and (\ref{eqnf}).  Note that $G(0,0)=0$ and, by the definition of $\xi$,
$$\frac{\partial G}{\partial \alpha} (0,0) =  D_i u (y_0) \xi_i = |Du(y_0)| >0,$$ where here and henceforth we are using the convention of summing repeated indices.

Hence by the implicit function theorem there exists a real analytic map $\alpha=\alpha(w)$ defined in a neighborhood $U$ of the origin in $\mathbb{R}^n$ to $\mathbb{R}$ with $\alpha(0)=0$ such that $G(w, \alpha(w))=0$ for all $w \in U$.  It only remains to show that $\alpha(w) = O(|w|^3)$.

Write $y = y_0 + cLw + f(w) \xi + \alpha(w) \xi$, $x = x_0+w$ and $L = (L_{ij})$ so that $L_{ij} D_ju(x_0) = c D_iu(y_0)$ and $c L_{ij} D_iu(y_0) = D_j u(x_0)$.  Then at $w \in U$,
\begin{equation} \label{dG}
\begin{split}
0 =  \frac{\partial G}{\partial w_j} 
%=  {} &  u_i(y) \frac{\partial}{\partial w_j} ((y_0)_i + c(Lw)_i + f(w)\xi_i + \alpha(w) \xi_i) - u_j (x) \\
= {} &  D_i u(y) \bigg( c L_{ij} + \frac{(D_ju (x) - c D_ku(y_0+cLw) L_{kj})}{|Du(y_0)|}  \xi_i \\ {} & + \frac{\partial \alpha}{\partial w_j} \xi_i \bigg) - D_ju(x),
\end{split}
\end{equation}
and evaluating at $w=0$ gives $0= |Du(y_0)|  \frac{\partial \alpha}{\partial w_j}(0)$
%$$0 = u_j(x_0) + u_j(x_0) - u_j(x_0)  + |Du(y_0)|  \frac{\partial \alpha}{\partial w_j}(0)- u_j(x_0)= |Du(y_0)|  \frac{\partial \alpha}{\partial w_j}(0),$$
and hence $\partial \alpha/\partial w_j(0) =0$ for all $j$.

Differentiating (\ref{dG}) and evaluating at $w=0$, we obtain for all $j, \ell$,
\[
\begin{split}
0 = {} &   \frac{\partial^2 G}{\partial w_{\ell} \partial w_j} \\
=  {} &   D_k D_i u(y_0) c^2 L_{ij} L_{k \ell}  - D_{\ell} D_j u(x_0) \\ {} & + D_i u(y_0) \bigg( \frac{(D_{\ell} D_j u(x_0) - c^2 D_m D_k u(y_0) L_{kj} L_{m\ell})}{|Du(y_0)|}  \xi_i  + \frac{\partial^2 \alpha}{\partial w_{\ell} \partial w_j}(0) \xi_i \bigg) \\
= {} &  | Du(y_0)| \frac{\partial^2 \alpha}{\partial w_{\ell} \partial w_j}(0).
\end{split}
\]
Hence $\alpha(w) = O(|w|^3)$, as required.
\end{proof}

Now assume that $Q_{\delta}$ achieves a maximum at the interior point $(x_0, y_0)$.  Write $x= x_0+w =(x_1, \ldots, x_n)$ and $y = y_0 + cLw + f(w) \xi + \alpha(w) \xi=(y_1, \ldots, y_n)$ and
$$F(w) = Q_{\delta}(x,y) = Q(x_0 + w, y_0 + cLw + f(w) \xi + \alpha(w) \xi) + \delta |x_0+w|^2.$$
To prove the lemma it suffices to show that $\Delta_w F(0) > 0$, where we write $\Delta_w = \sum_j \frac{\partial^2}{\partial w_j^2}$.   Observe that
$$\Delta_w x(0) = 0 = \Delta_w y(0).$$
Hence, evaluating at $0$, we get
\[
\begin{split}
\Delta_w F = {} & \sum_j ( \frac{\partial^2}{\partial w_j^2} (D_iu(y) - D_iu(x)) ) (y_i-x_i) \\ {} & + 2  \frac{\partial}{\partial w_j} (D_i u(y) - D_i u(x)) \frac{\partial}{\partial w_j} (y_i-x_i)  + \sum_j \frac{\partial^2}{\partial w_j^2} \psi( |y-x|^2)  + 2n\delta.
\end{split}
\]
First compute
\begin{equation*} \label{psicomp}
\begin{split}
\lefteqn{ \sum_j \frac{\partial^2}{\partial w_j^2} \psi( |y-x|^2)  } \\
%= {} & 2 \sum_j \frac{\partial}{\partial s_j} \left( \psi' \sum_i (y_i-x_i) \frac{\partial}{\partial s_j} (y_i-x_i) \right) \\
%= {} &2 \psi' \sum_{i,j} \left( \frac{\partial}{\partial w_j} (y_i-x_i) \right)^2  + 4 \psi''  \sum_j \left( \sum_i (y_i-x_i) \frac{\partial}{\partial w_j} (y_i-x_i) \right)  \left( \sum_k (y_k-x_k) \frac{\partial}{\partial w_j} (y_k-x_k) \right) \\
= {} & 2 \psi' \sum_{i,j} \left( c L_{ij} - \delta_{ij} \right)^2
+ 4 \psi'' \sum_j \left( \sum_i (y_i-x_i) (cL_{ij} - \delta_{ij}) \right)^2 \\ % \left( \sum_k (y_k-x_k) (cL_{kj} - \delta_{kj}) \right) \\
%\ge {} & 2 \psi' \sum_{i,j} \left( c L_{ij} - \delta_{ij} \right)^2 
%- 4 | \psi''| \sum_j \left( \sum_i (y_i-x_i)^2 \right) \left( \sum_i (cL_{ij}  - \delta_{ij})^2 \right) \\
\ge {} & 2 \psi' \sum_{i,j} \left( c L_{ij} - \delta_{ij} \right)^2 
- 4 | \psi''| | y-x|^2 \sum_{i,j} (cL_{ij}  - \delta_{ij})^2 \ge 0,
\end{split}
\end{equation*}
using the Cauchy-Schwarz inequality and the condition (\ref{psiass}).

Next, at $w=0$,
\[
\begin{split}
%\frac{\partial y_i}{\partial w_j}   =  {} & c L_{ij}, \quad \frac{\partial x_i}{\partial w_j} = \delta_{ij} \\
\frac{\partial}{\partial w_j} D_i u(y) = {} & D_k D_i u(y) \frac{\partial y_k}{\partial w_j} = c D_k D_i u(y) \, L_{kj} \\
\sum_j \frac{\partial^2}{\partial w_j^2} D_i u(y) = {} &  D_{\ell} D_k D_iu(y) \frac{\partial y_k}{\partial w_j} \frac{\partial y_{\ell}}{\partial w_j} =  c^2 D_{\ell} D_k D_iu(y) \, L_{kj} L_{\ell j}  = 0\\
\frac{\partial}{\partial w_j} D_i u(x) = {} &   D_j D_i u(x), \ \  \sum_j \frac{\partial^2}{\partial w_j^2} D_i u(x)=  D_j D_j D_i u(x)=0,
\end{split}
\]
where for the second line we used the fact that $\Delta_w y(0)=0$ and $ L_{kj} L_{\ell j} D_{\ell}D_k u = \Delta u=0$.
Hence, combining the above,
\[
\begin{split}
\Delta_w F > {} & 2  ( c D_k D_i u(y) L_{kj} - D_j D_i u(x)) ( c L_{ij} - \delta_{ij}) \\
= {} & 2 c^2 \Delta u (y) - 2c  L_{ki} D_k D_i u(y) -2c  L_{ij} D_j D_i u(x) + 2\Delta u (x) \\ 
= {} & - 2c L_{ki} D_k D_i u(y) -2c  L_{ij} D_j D_i u(x).
\end{split}
\]
Now we use the fact that $n$ is even, and we make an appropriate choice of $L$ following Lemma 4.1(a) of \cite{RR}.  Namely, after making an orthonormal change of coordinates, we may assume, without loss of generality that $Du(x_0)/|Du(x_0)|$ is $e_1$, and 
$$Du(y_0)/|Du(y_0)| = \cos \theta \, e_1 + \sin \theta \, e_2,$$
for some $\theta \in [0,2\pi)$. Here we are writing $e_1 = (1,0,\ldots 0)$ and $e_2 = (0,1,0, \ldots)$ etc for the standard unit basis vectors in $\mathbb{R}^n$.  Then define the isometry $L$ by 
$$L(e_i) = \left\{ \begin{array}{ll} \cos \theta \, e_i + \sin \theta \, e_{i+1}, & \quad \textrm{for } i=1,3, \ldots, n-1 \\
- \sin \theta \, e_{i-1} + \cos \theta \, e_{i}, & \quad \textrm{for } i=2, 4, \ldots, n.\end{array} \right.$$
In terms of entries of the matrix $(L_{ij})$, this means that $L_{kk} = \cos \theta$ for $k=1, \ldots n$ and for $\alpha = 1, 2, \ldots, n/2$, we have
$$L_{2\alpha-1, 2\alpha}= - \sin \theta, \quad L_{2\alpha, 2\alpha -1} = \sin \theta,$$
with all other entries zero.
Then 
\begin{equation} \label{Lki}
\begin{split}
\sum_{i,k} L_{ki} D_k D_i u(y) = {} & \sum_{k=1}^n L_{kk} D_k D_k u(y) + \sum_{\alpha=1}^{n/2} (L_{2\alpha-1, 2\alpha} + L_{2\alpha, 2\alpha -1}) D_{2\alpha-1} D_{2\alpha} u(y) \\ ={} & (\cos \theta) \Delta u(y)=0.
\end{split}
\end{equation}
Similarly 
$\sum_{i,k} L_{ki} D_k D_iu(x)=0.$
This completes the proof of Theorem \ref{theoremmain} in the case of $n$ even.

\bigskip

For $n$ odd, we argue in the same way as in \cite{RR}.  Let $L$ be an isometry of the even-dimensional $\mathbb{R}^{n+1}$, defined in the same way as above, but now with
$$L(Du(x_0), 0) = (c (Du)(y_0), 0).$$
 In Lemma \ref{lemmaRR}, replace $w \in \mathbb{R}^n$ by $w \in \mathbb{R}^{n+1}$.  Define $\pi :\mathbb{R}^{n+1} \rightarrow \mathbb{R}^n$ to be the projection $(w_1, \ldots, w_{n+1}) \mapsto (w_1, \ldots, w_n)$ and replace (\ref{RR}) and (\ref{eqnf}) by
 \begin{equation} \label{RR2}
u(x_0 + \pi(w)) = u\big(y_0 + c \pi(Lw) + f(w) \xi + \alpha(w) \xi\big), \end{equation}
where $\xi = \frac{Du(y_0)}{|Du(y_0)|}$ and  $f$ is given by
\begin{equation} \label{eqnf2}
f(w) = \frac{1}{|Du(y_0)|} (u(x_0+\pi (w)) - u(y_0+c\pi(Lw))).
\end{equation}
As in \cite{RR}, note that if $g: \mathbb{R}^n \rightarrow \mathbb{R}$ is harmonic in $\mathbb{R}^n$ then $w \mapsto g(\pi(Lw))$ is harmonic in $\mathbb{R}^{n+1}$.  In particular, $f$ is harmonic in a neighborhood of the origin in $\mathbb{R}^{n+1}$.  The function $G$ above becomes $G(w,\alpha) = u(y_0 + c\pi(Lw) + f(w) \xi + \alpha\xi)-u(x_0+\pi(w))$ with $w \in \mathbb{R}^{n+1}$, and we make similar changes to $F$.  It is straightforward to check that the rest of the proof goes through.

\begin{remark} The proof of Theorem \ref{theoremmain} also shows that when $\psi=0$ the quantity $Q(x,y)$ does not attain a strict interior \emph{minimum}.
\end{remark}

\section{Global to Infinitesimal}

Here we give a proof of Theorem \ref{theoremCMY} using the quantity $Q$.  
%From Theorem \ref{theoremmain} it follows that the level sets of $u$ are all strictly convex.  
We first claim that, for $x \in \Omega$ and $a>0$,
$$(Du(y) - Du(x)) \cdot (y-x) + a|y-x|^2 \le O(|y-x|^3), \quad \textrm{for } y \sim x, \ u(x)=u(y)$$
if and only if
$$(\kappa_1 |Du|) (x) \ge a.$$
Indeed, to see this, first choose coordinates  such that at $x$ we have $Du = (0, \ldots, 0, D_n u)$  and $(D_i D_j u)_{1 \le i,j \le n-1}$ is diagonal with $$D_1 D_1 u \ge \cdots \ge D_{n-1} D_{n-1} u.$$ 
For the ``if'' direction of the claim choose $y(t) = x + te_1 + O(t^2)$ such that $u(x)=u(y(t))$, for $t$ small.  By Taylor's Theorem,
$$(Du(y(t)) - Du(x)) \cdot (y(t)-x) + a |y(t)-x|^2= t^2 D_1 D_1 u(x) + a t^2 + O(t^3)$$
giving $D_1 D_1 u(x) \le -a$, which is the same as $|Du| \kappa_1 \ge a$.  Indeed from a well-known and elementary calculation (see for example \cite[Section 2]{CMY}),
 $$\kappa_1 = \frac{ - D_1 D_1u}{|Du|}$$
 at $x$. 
Hence $|Du| \kappa_1  \ge a$.  The ``only if'' direction of the claim follows similarly.

We will make use of this correspondence  in what follows.  

\begin{proof}[Proof of Theorem \ref{theoremCMY}]  By assumption, 
 $\kappa_1 |Du| \ge a>0$ on $\partial \Omega$.  It follows  from Theorem \ref{theoremmain} and the discussion above that the level sets of $u$ are all strictly convex.  Assume for a contradiction that $\kappa_1 |Du|$ achieves a strict (positive) minimum at a point $x_0$ in the interior of $\Omega$, say
\begin{equation} \label{x0}
(\kappa_1 |Du|) (x_0) = a- \eta >0 \ \textrm{ for some $\eta>0$}.
\end{equation}
We may assume without loss of generality that $\eta < a/6$.  Indeed, if not then if $x_0$ lies on the level set $\{ u= c\}$ for some $c \in (0,1)$ we can replace $\Omega$ by a convex ring $\{ c_0 < u < c_1 \}$  for $c_0, c_1$ with $0\le c_0 <c<c_1\le1$.   We still denote by $a$  the minimum value of $\kappa_1 |Du|$ on the  boundary of this new $\Omega$.  For appropriately chosen $c_0,c_1$ we have (\ref{x0}) and $\eta<a/6$. This changes the boundary conditions on $\partial \Omega_0$ and $\partial \Omega_1$ to $u=c_0$ and $u=c_1$, but this will not affect any of the arguments.

 Pick $\ve>0$ sufficiently small, so that the distance from $x_0$ to the boundary of $\Omega$ is much larger than $\ve$, and in addition, so that $\ve^{1/3} << \eta$.

Consider the quantity
$$Q(x,y) = (Du(y) - Du(x)) \cdot (y-x) + a|y-x|^2 - \frac{a}{6\ve^2} |y-x|^4,$$
and restrict to the set
$$\Sigma^{\ve} = \{ (x,y) \in \bar{\Omega} \times \bar{\Omega} \ | \ u(x)=u(y), \  |y- x| \le \ve\}.$$
Suppose that $Q$ attains a maximum  on $\Sigma^{\ve}$  at a point $(x, y)$.  First assume that $(x,y)$ lies in the boundary of $\Sigma^{\ve}$.  There are two possible cases:

\begin{enumerate}
\item If $x,y \in \Sigma^{\ve}$ with $x$ and $y$ in  $\partial \Omega$ (note that since $u(x)=u(y)$, if one of $x,y$ is a boundary point then so is the other) then since $\kappa_1 |Du| \ge a$ on $\partial \Omega$ we have
$$(Du(y) - Du(x)) \cdot (y-x) + a|y-x|^2 \le O(\ve^3).$$
Hence in this case $Q(x,y) \le O(\ve^3)$.

\smallskip

\item If $|y-x| = \ve$ then since $\kappa_1 |Du| \ge a -\eta$ everywhere,
$$Q(x,y) \le - (a - \eta) \ve^2 + O(\ve^3) + a \ve^2 - \frac{a}{6} \ve^2 = (\eta - \frac{a}{6}) \ve^2 + O(\ve^3) < 0,$$
by the assumption $\eta<a/6$.
\end{enumerate}

We claim that neither case can occur.  Indeed, consider  $y = x_0 + tv +O(t^2)$ for $t$ small, where $v$ is vector in the direction of the smallest curvature of the level set of $u$ and $x_0$ satisfies (\ref{x0}).  Then since $(|Du|\kappa_1)(x_0) = a -\eta$,
\[
\begin{split}
Q(x,y) = {} & - (a- \eta) |y-x_0|^2 + O(|y-x_0|^3) + a|y-x_0|^2 - \frac{a}{6\ve^2} |y-x_0|^4 \\
 = {} & \eta |y-x_0|^2 - \frac{a}{6\ve^2} |y-x_0|^4 +O(|y-x_0|^3).
 \end{split}
 \]
 If $|y-x_0| \sim \ve^{4/3}$ say then $Q(x_0,y) \sim \eta \ve^{8/3} + O(\ve^3) >> \ve^3$ since we assume $\eta >> \ve^{1/3}$.  
Since $Q$ here is larger  than in (1) or (2),  this rules out (1) or (2) as being possible cases for the maximum of $Q$.

This implies that $Q$ must attain an interior maximum, contradicting the 
argument of Theorem \ref{theoremmain}.  Here we use the fact that if $\psi(t) = at - \frac{a}{6\ve^2} t^2$  then for $t$ with $0 \le t \le \ve^2$,
$$\psi'(t) - 2| \psi''(t)| t = a (1  -   \frac{t}{\ve^2} )  \ge 0.$$
This completes the proof.
\end{proof}

\begin{remark} In \cite{CMY} and also \cite{MYY} it was shown that when $n=3$ the smallest principal curvature $\kappa_1$ also satisfies a minimum principle.  It would be interesting to know whether a modification of the quantity (\ref{Qd}) can give another proof of this.

\end{remark}


\begin{thebibliography}{99}
\bibitem{A}  Ahlfors, L.V. {\em Conformal invariants: Topics in geometric function theory}, McGraw-Hill Series
in Higher Mathematics. McGraw-Hill Book Co., New York-D\"usseldorf-Johannesburg, 1973
\bibitem{ALL} Alvarez, O., Lasry, J.-M., Lions, P.-L. {\em Convex
    viscosity solutions and state constraints}, J. Math. Pures
  Appl. (9) 76 (1997), no. 3, 265--288
\bibitem{AC}  Andrews, B., Clutterbuck, J. {\em Proof of the fundamental gap conjecture}, J. Amer. Math. Soc. 24 (2011), no. 3, 899--916
\bibitem{BG} Bian, B., Guan, P. {\em 
A microscopic convexity principle for nonlinear partial differential equations}, Invent. Math. 177 (2009), 307--335
\bibitem{BLS} Bianchini, C.,  Longinetti, M., Salani, P. {\em Quasiconcave solutions to elliptic problems in
convex rings}, Indiana Univ. Math. J., 58 (2009), 1565--1590
\bibitem{Bo} Borell, C. {\em Brownian motion in a convex ring and quasiconcavity},  Comm. Math. Phys. 86 (1982), no. 1, 143--147
\bibitem{BL}  Brascamp, H.J., Lieb, E.H. {\em On extensions of the Brunn-Minkowski and Pr\'ekopa-Leindler theorems, including inequalities for log concave functions, and with an application to the diffusion equation}, J. Functional Analysis 22 (1976), no. 4, 366--389
\bibitem{CF} Caffarelli, L., Friedman, A. {\em Convexity of solutions of some semilinear elliptic equations}, Duke Math. J. 52 (1985), 431--455
\bibitem{CGM} Caffarelli, L., Guan, P., Ma, X. {\em A constant rank
    theorem for solutions of fully nonlinear elliptic equations},
  Commun. Pure Appl. Math. 60 (2007), 1769--1791
\bibitem{CS} Caffarelli, L., Spruck, J. {\em Convexity properties of
    solutions to some classical variational problems}, Comm. Partial
  Differential Equations 7 (1982), 1337--1379
\bibitem{CMY} Chang, S.-Y.A., Ma, X.-N., Yang, P. \emph{Principal curvature estimates for the convex level sets of semilinear elliptic equations}, Discrete Contin. Dyn. Syst. 28 (2010), no. 3, 1151--1164
\bibitem{DK} Diaz, J. I., Kawohl, B. {\em On convexity and starshapedness of level sets for some nonlinear elliptic and parabolic problems on convex rings}, J. Math. Anal. Appl. 177 (1993), no. 1, 263--286
\bibitem{G} Gabriel, R. {\em A result concerning convex level surfaces of 3-dimensional harmonic
functions}, J. London Math. Soc. 32 (1957), 286--294
\bibitem{JMO} Jost, J., Ma, X.-N., Ou, Q. {\em
Curvature estimates in dimensions 2 and 3 for the level sets of $p$-harmonic functions in convex rings}, 
Trans. Amer. Math. Soc. 364 (2012), no. 9, 4605--4627
\bibitem{HNS} Hamel, F., Nadirashvili, N., Sire, Y. {\em Convexity of level sets for elliptic problems in convex domains or convex rings: two counterexamples}, Amer. J. Math. 138 (2016), no. 2, 499--527
\bibitem{Ka0} Kawohl, B. {\em Rearrangements and Convexity of Level Sets in PDE}, Lecture Notes in Math., vol. 1150,
Springer-Verlag, Berlin, 1985.
%\bibitem{Ka} Kawohl, B. {\em 
%A remark on N. Korevaar's concavity maximum principle and on the asymptotic uniqueness of solutions to the plasma problem}, 
%Math. Methods Appl. Sci. 8 (1986), no. 1, 93--101
%\bibitem{Ke} Kennington, A.U. {\em Power concavity and boundary value problems},  Indiana Univ. Math. J. 34 (1985), no. 3, 687--704
\bibitem{Ko1}  Korevaar, N.J. {\em Capillary surface convexity above convex domains}, Indiana Univ. Math. J. 32 (1983), 73--81
\bibitem {Ko2} Korevaar, N.J. {\em Convexity of level sets for solutions to elliptic ring problems}, Comm.
Part. Diff. Eq. 15 (1990), no. 4, 541--556
\bibitem{KL} Korevaar, N.J., Lewis, J.L. {\em Convex solutions of certain elliptic equations have constant rank Hessians}, 
Arch. Rational Mech. Anal. 97 (1987), no. 1, 19--32
\bibitem{L}  Lewis, J. {\em  Capacitary functions in convex rings},  Arch. Rat. Mech. Anal. 66 (1977), 201--224
\bibitem{Lo} M. Longinetti, {\em Convexity of the level lines of harmonic functions}, Boll. Un. Mat.
Ital. A 6 (1983), 71--75
\bibitem{MOZ}  Ma, X.-N., Ou, Q,, Zhang, W. {\em Gaussian curvature estimates for the convex level sets of p-harmonic functions} Comm. Pure Appl. Math. 63 (2010), no. 7, 935--971
\bibitem{MYY} Ma, X.-N., Ye, J., Ye, Y.-H. {\em Principal curvature estimates for the level sets of harmonic functions and minimal graphs in $\mathbb{R}^3$}. Commun. Pure Appl. Anal. 10 (2011), no. 1, 225--243
\bibitem{OS} Ortel, M., Schneider, W. {\em Curvature of level curves of harmonic functions}, Canad. Math.
Bull., 26 (1983), 399--405
\bibitem{RR} Rosay, J.-P., Rudin, W. \emph{A maximum principle for sums of subharmonic functions, and the convexity of level sets}, Michigan Math. J. 36 (1989), no. 1, 95--111
\bibitem{S} Shiffman, M. {\em On surfaces of stationary area bounded by two circles or convex
curves in parallel planes}, Ann. Math. 63, (1956) 77--90
\bibitem{SWYY} Singer, I., Wong, B., Yau, S.T., Yau, S.S.T. {\em An estimate of gap of the first two eigenvalues in the
Schrodinger operator}, Ann. Scuola Norm. Sup. Pisa Cl. Sci. (4) 12 (1985), 319--333
\bibitem{SW}  Sz\'ekelyhidi, G., Weinkove, B. {\em On a constant rank theorem for nonlinear elliptic PDEs}, Discrete Contin. Dyn. Syst. 36 (2016), no. 11, 6523--6532
\bibitem{W} Wang, X.-J. {\em Counterexample to the convexity of level sets of solutions to the mean curvature equation}, J. Eur. Math. Soc. (JEMS) 16 (2014), no. 6, 1173--1182
\bibitem{ZZ}  Zhang, T., Zhang, W. \emph{On convexity of level sets of $p$-harmonic functions}, J. Differential Equations 255 (2013), no. 7, 2065--2081
\end{thebibliography}
\end{document}